\newtheorem{thm}{Theorem}[section] 
\newtheorem{prop}[thm]{Proposition}
\newtheorem{cor}[thm]{Corollary} 
\newtheorem{lem}[thm]{Lemma}
\theoremstyle{definition} 
\newtheorem{rem}[thm]{Remark}
\numberwithin{equation}{section}
\begin{document}
\baselineskip=17pt

\title{On geometry of frame bundles}
\author{Kamil Niedzia\l omski}

\thanks{The author is supported by the Polish NSC grant N6065/B/H03/2011/40}
\date{}
\subjclass[2000]{53C10; 53C24; 53A30}
\keywords{Riemannian manifold, frame bundle, tangent bundle, natural metric}

\address{
Department of Mathematics and Computer Science \endgraf
University of \L\'{o}d\'{z} \endgraf
ul. Banacha 22, 90-238 \L\'{o}d\'{z} \endgraf
Poland
}
\email{kamiln@math.uni.lodz.pl}

\begin{abstract}
Let $(M,g)$ be a Riemannian manifold, $L(M)$ its frame bundle. We construct new examples of Riemannian metrics, which are obtained from Riemaniann metrics on the tangent bundle $TM$. We compute the Levi--Civita connection and curvatures of these metrics.
\end{abstract}

\maketitle

\section{Introduction} 
Let $(M,g)$ be a Riemannian manifold, $L(M)$ its frame bundle. The first example of a Riemannian metric on $L(M)$ was considered by Mok \cite{mok}. This metric, called the Sasaki--Mok metric or the diagonal lift $g^d$ of $g$, was also investigated in \cite{cl1} and \cite{cl2}. It is very rigid, for example, $(L(M),g^d)$ is never locally symmetric unless $(M,g)$ is locally Euclidean. Moreover,  with respect to the Sasaki--Mok metric vertical and horizontal distributions are orthogonal. A wider and less rigid class of metrics $\bar g$, in which vertical and horizontal distributions are no longer orthogonal, has been recently considered by Kowalski and Sekizawa in the series of papers \cite{ks0,ks1,ks2}. These metrics are defined with respect to the decomposition of the vertical distribution $\mathcal{V}$ into $n=\dim M$ subdistributions $\mathcal{V}^1,\ldots,\mathcal{V}^n$. 

In this short paper we introduce a new class of Riemannian metrics on the frame bundle. We identify distributions $\mathcal{V}^i$ with the vertical distribution in the second tangent bundle $TTM$. Namely, each map $R_i:L(M)\to TM$, $R_i(u_1,\ldots,u_n)=u_i$ induces a linear isomorphism $R_{i\ast}:\mathcal{H}\oplus\mathcal{V}^i\to TTM$, where $\mathcal{H}$ is a horizontal distribution defined by the Levi--Civita connection $\nabla$ on $M$. By this identification we pull--back the Riemannian metric from $TM$. We pull--back natural metrics, in the sence of Kowalski and Sekizawa \cite{ks}, from $TM$ and study the geometry of such Riemannian manifolds. We compute the Levi--Civita connection, the curvature tensor, sectional and scalar curvature.

\section{Riemannian metrics on frame bundles}
Let $(M,g)$ be a Riemannian manifold. Its frame bundle $L(M)$ consists of pairs $(x,u)$ where $x=\pi_{L(M)}(u)\in M$ and $u=(u_1,\ldots,u_n)$ is a basis of a tangent space $T_xM$. We will write $u$ instead of $(x,u)$. Let $(x_1,\ldots,x_n)$ be a local coordinate system on $M$. Then, for every $i=1,\ldots,n$ we have
\[
u_i=\sum_j u^j_i\frac{\partial}{\partial x_j}
\]  
for some smooth functions $u^j_i$ on $L(M)$. Putting $\alpha_i=x_i\circ\pi_{L(M)}$, $(\alpha_i,u^j_k)$ is a local coordinate system on $L(M)$. Let $\omega$ be a connection form of $L(M)$ corresponding to Levi--Civita connection $\nabla$ on $M$. We have a decomposition of the tangent bundle $TL(M)$ into the {\it horizontal} and {\it vertical} distribution:
\[
T_uL(M)=\mathcal{H}^{L(M)}_u\oplus \mathcal{V}^{L(M)}_u,
\]
where $\mathcal{H}^{L(M)}={\rm ker}\omega$ and $\mathcal{V}^{L(M)}={\rm ker}\pi_{L(M)\ast}$. Let $X^h$ denotes the horizontal lift of a vector field $X$ on $M$. 

Decompose the second tangent bundle $TTM$ into horizontal and vertical part, $T_{\zeta}TM=\mathcal{H}^{TM}_{\zeta}\oplus\mathcal{V}^{TM}_{\zeta}$, with respect to the connection map $K:TTM\to TM$ and the projection in the tangent bundle $\pi_{TM}:TM\to M$, see for example \cite{dom}. Let $X^{h,TM}$ and $X^{v,TM}$ denote the horizontal and vertical lifts to $TTM$ of a vector field $X$ on $M$.   

For an index $i=1,\ldots,n$ define a map $R_i:L(M)\to TM$ as follows
\begin{equation*}
R_i(u)=u_i,\quad u=(u_1,\ldots,u_n)\in L(M).
\end{equation*}
$R_i$ is the right multiplication by a $i$--th vector of a canonical basis in $\mathbb{R}^n$.
\begin{prop} \label{p1}
 The operator $R_i$ has the following properties.
\begin{enumerate}
\item[(1)] We have
\begin{equation*}
R_{\xi\ast}X^h=X^{h,TM}.
\end{equation*}
In particular, $R_{i\ast}$ is an isomorphism of $\mathcal{H}^{L(M)}$ and $\mathcal{H}^{TM}$, 
\item[$(2)$] Let $\mathcal{V}^i$ be a linear subspace of $\mathcal{V}^{L(M)}$ spanned by fundamental vertical vectors $A^*$, where the matrix $A\in{\rm gl}(n)$ has only nonzero $i$--th column. Then $R_{i\ast}$ is an isomorphism of $\mathcal{V}^i$ and $\mathcal{V}^{TM}$, and is zero on $\mathcal{V}^j$ for $j\not=i$. Moreover, there is a decomposition
\begin{equation*}
\mathcal{V}^{L(M)}=\mathcal{V}^1\oplus\ldots\oplus\mathcal{V}^n.
\end{equation*}
\end{enumerate}
\end{prop}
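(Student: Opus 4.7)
The plan is to verify both parts of the proposition by evaluating $R_{i\ast}$ on explicit curves in $L(M)$ realizing horizontal lifts and fundamental vertical vectors, and then reading off the result in $TM$.

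For part (1), I would realize the horizontal lift $X^h$ at a frame $u\in L(M)$ as the tangent vector at $t=0$ to the parallel curve
\begin{equation*}
u(t)=(P_t u_1,\ldots,P_t u_n),
\end{equation*}
where $P_t$ denotes parallel transport along the integral curve of $X$ starting at $\pi_{L(M)}(u)$. Applying $R_i$ gives $R_i(u(t))=P_t u_i$, which is precisely the horizontal curve in $TM$ through $u_i$ with respect to the connection map $K$. By the definition of the horizontal lift on $TM$, its velocity at $t=0$ is $X^{h,TM}_{u_i}$. Since $\dim\mathcal{H}^{L(M)}=\dim\mathcal{H}^{TM}=n$ and this map is surjective, $R_{i\ast}$ induces an isomorphism of $\mathcal{H}^{L(M)}$ with $\mathcal{H}^{TM}$.

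For part (2), I would start from the defining curve of a fundamental vertical vector $A^{*}$: for $A=(a^k_j)\in\mathrm{gl}(n)$, $A^{*}_u$ is the tangent at $t=0$ to $t\mapsto u\cdot\exp(tA)$, where $(u\cdot g)_j=\sum_k g^k_j u_k$. A direct expansion gives
\begin{equation*}
R_i\bigl(u\cdot\exp(tA)\bigr)=u_i+t\sum_k a^k_i u_k+O(t^2),
\end{equation*}
so $R_{i\ast}A^{*}_u$ is the element of $\mathcal{V}^{TM}_{u_i}$ canonically identified with $\sum_k a^k_i u_k\in T_x M$. This vanishes if and only if the $i$-th column of $A$ is zero. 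Consequently $R_{i\ast}$ kills $\mathcal{V}^j$ for $j\neq i$, and its restriction to $\mathcal{V}^i$ is injective; since $\dim\mathcal{V}^i=n=\dim\mathcal{V}^{TM}$, it is an isomorphism onto $\mathcal{V}^{TM}$.

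The decomposition $\mathcal{V}^{L(M)}=\mathcal{V}^1\oplus\cdots\oplus\mathcal{V}^n$ then follows from the well-known fact that $A\mapsto A^{*}_u$ is a linear isomorphism $\mathrm{gl}(n)\to\mathcal{V}^{L(M)}_u$, combined with the obvious splitting of $\mathrm{gl}(n)$ into the $n$ subspaces of matrices supported in a single column. I expect the only real obstacle to be bookkeeping: one must be careful with the convention for the right $GL(n)$-action on frames and the placement of the column index $i$ in $a^k_i$, since an inconsistent convention would reverse the roles of rows and columns and spoil the identification of $\mathcal{V}^i$.
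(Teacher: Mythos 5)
Your proof is correct: the curve realizations (parallel frame field for $X^h$, $t\mapsto u\cdot\exp(tA)$ for $A^*$) compute $R_{i\ast}$ exactly as needed, and your column-index bookkeeping matches the paper's convention $R_i(u)=u(e_i)$ for the right action. The paper itself leaves this as ``easy computations left to the reader,'' and your argument is precisely the standard verification that was intended, so there is nothing to correct.
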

\begin{proof}
Easy computations left to the reader.
\end{proof}
By Proposition \ref{p1}, we have natural identifications
\begin{gather} \label{ident1}
\begin{array}{ccccc}
\mathcal{H}^{L(M)} & \longleftrightarrow & \mathcal{H}^{TM} & \longleftrightarrow & TM \\
X^h & \longleftrightarrow & X^{h,TM} & \longleftrightarrow & X
\end{array}
\end{gather}
and
\begin{gather} \label{ident2}
\begin{array}{ccccc}
\mathcal{V}^i & \longleftrightarrow & \mathcal{V}^{TM} & \longleftrightarrow & TM \\
X^{v,i} & \longleftrightarrow & X^{v,TM} & \longleftrightarrow & X
\end{array}
\end{gather}
Hence, we have defined the vertical lift $X^{v,i}\in \mathcal{V}^i$ of the vector $X\in TM$ satisfying the property
\begin{equation*}
R_{i\ast}X^{v,i}=X^{v,TM}.
\end{equation*}

Let $c=(c_1,\ldots,c_n)\in\mathbb{R}^n$ and $C=(c_{ij})$ be $n\times n$ matrix. We assume that the $(n+1)\times (n+1)$ matrix
\begin{equation*}
\bar C=\left( \begin{array}{cc} 1 & c \\ c^{\top} & C \end{array} \right)
\end{equation*}
is symmetric and positive definite. Let $g_{TM}$ be a Riemannian metric on $TM$. 

Now, we are able to define a new class of Riemannian metrics $\bar g=\bar g_{\varepsilon,\bar C}$ on $L(M)$. Let $F:L(M)\to TM$ be any smooth function. Put
\begin{align*}
&\bar g(X^h,Y^h)_u =g_{TM}(X^{h,TM},Y^{h,TM})_{F(u)}, \\
&\bar g(X^h,Y^{v,i})_u =c_i g_{TM}(X^{h,TM},Y^{v,TM})_{F(u)}, \\
&\bar g(X^{v,i},Y^{v,j})_u =c_{ij} g_{TM}(X^{v,TM},Y^{v,TM})_{F(u)}.
\end{align*} 

Fix $u\in L(M)$. Let $e_1,\ldots,e_n$ be a basis in $T_xM$, $\pi(u)=x$, such that $(e_1)^{h,TM}_{F(u)},\ldots,(e_1)^{h,TM}_{F(u)}$ is an orthonormal basis in $\mathcal{H}^{TM}_{F(u)}$. Then
\begin{equation}\label{basis}
e_1^h,\ldots,e_n^h,e_1^{v,1},\ldots,e_n^{v,1},
\ldots,e_1^{v,n},\ldots,e_n^{v,n}
\end{equation}
is a basis in $T_uL(M)$. Let $G$ be a matrix of the Riemanian metric $g_{TM}$ with respect to the basis $e_1^{h,TM},\ldots,e_n^{h,TM},e_1^{v,TM},\ldots,e_n^{v,TM}$. The fact that $\bar g$ is positive definite follows from the following lemma.

\begin{lem}
Let
\begin{equation*}
G=\left( \begin{array}{cc} I & g^{hv} \\ g^{vh} & \hat g \end{array} \right)
\end{equation*}
be a positive definite symmetric $2n\times 2n$ block martix. Then the matrix
\begin{equation*}
\bar G=\left( \begin{array}{cc} I & c\otimes g^{vh} \\ c^{\top}\otimes g^{hv} & C\otimes \hat g \end{array} \right)
\end{equation*}
is positive definite.
\end{lem}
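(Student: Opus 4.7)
\medskip

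My plan is to exhibit $\bar{G}$ as the pullback of the tensor product form $\bar{C}\otimes G$ under a natural linear injection, so that positive definiteness of $\bar{G}$ follows from positive definiteness of $\bar{C}\otimes G$. The key observation is that the block structure of $\bar{G}$ matches exactly the pattern produced by a Kronecker product of symmetric forms.

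More concretely, given a vector $v\in\mathbb{R}^{n+n^2}$, decompose it as $v=(v^h,v^{v,1},\ldots,v^{v,n})$ where $v^h\in\mathbb{R}^n$ and each $v^{v,i}\in\mathbb{R}^n$. I would embed these pieces into $\mathbb{R}^{2n}$ by setting $V_0=(v^h,0)$ and $V_i=(0,v^{v,i})$ for $i=1,\ldots,n$. A direct check using the block form of $G$ shows
\begin{equation*}
V_0^{\top}GV_0=(v^h)^{\top}v^h,\qquad V_0^{\top}GV_i=(v^h)^{\top}g^{hv}v^{v,i},\qquad V_i^{\top}GV_j=(v^{v,i})^{\top}\hat g\,v^{v,j},
\end{equation*}
which are exactly the building blocks appearing in $v^{\top}\bar{G}v$ when one reads off $\bar{G}$.

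Next I would form the vector $W=\sum_{i=0}^{n}E_i\otimes V_i\in\mathbb{R}^{n+1}\otimes\mathbb{R}^{2n}$, where $E_0,\ldots,E_n$ is the standard basis of $\mathbb{R}^{n+1}$. Using $E_i^{\top}\bar{C}E_j=\bar{C}_{ij}$ and the previous identities, I obtain
\begin{equation*}
W^{\top}(\bar{C}\otimes G)W=\sum_{i,j=0}^{n}\bar{C}_{ij}\,V_i^{\top}GV_j=v^{\top}\bar{G}v.
\end{equation*}
Since $\bar{C}$ and $G$ are both symmetric positive definite, their Kronecker product $\bar{C}\otimes G$ is symmetric positive definite as well (its eigenvalues are products of eigenvalues of the factors, all positive). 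Hence $W^{\top}(\bar{C}\otimes G)W\geq 0$ with equality only when $W=0$.

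The remaining step is to observe that $W=0$ forces $v=0$: indeed, since the $E_i$ are linearly independent in $\mathbb{R}^{n+1}$, $W=0$ is equivalent to $V_0=V_1=\cdots=V_n=0$, which in turn forces $v^h=0$ and $v^{v,i}=0$ for every $i$. Thus $v^{\top}\bar{G}v>0$ whenever $v\neq 0$, so $\bar{G}$ is positive definite. I expect no real obstacle here; the only subtle point is bookkeeping the indices correctly so that the block $c\otimes g^{vh}$ (resp.\ $c^{\top}\otimes g^{hv}$) in $\bar{G}$ really corresponds to the cross terms $\bar{C}_{0i}V_0^{\top}GV_i$ under the above identification.
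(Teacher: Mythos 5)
Your argument is correct, but it proceeds quite differently from the paper. The paper verifies Sylvester's criterion: it asserts that each leading principal minor of $\bar G$ has the same block structure as $\bar G$ itself, computes $\det\bar G$ via the Schur complement together with Kronecker-product identities, and concludes positivity by writing the relevant matrix as the sum of $(C-c^{\top}c)\otimes\hat g$ (positive definite, since $C-c^{\top}c$ is the Schur complement of $1$ in $\bar C$) and $(c^{\top}c)\otimes(\hat g-g^{vh}g^{hv})$ (in fact only positive semidefinite, as $c^{\top}c$ has rank one). You instead evaluate the quadratic form directly: the identity $v^{\top}\bar G v=W^{\top}(\bar C\otimes G)W$ with $W=\sum_i E_i\otimes V_i$, the injectivity of $v\mapsto W$, and the standard fact that a Kronecker product of symmetric positive definite matrices is positive definite. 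Your route buys several things: it avoids the determinant computation altogether, it sidesteps the somewhat delicate claim that \emph{every} leading principal minor of $\bar G$ has the same form as $\bar G$ (which is not literally true for minors that cut through a Kronecker block), and it does not need the positive definite/semidefinite distinction that the paper glosses over. The one point to watch is the bookkeeping you yourself flag: with your identification the off-diagonal block of $\bar G$ comes out as $c\otimes g^{hv}$ (and its transpose below), i.e.\ the arrangement that actually arises as the Gram matrix of $\bar g$ in the basis \eqref{basis}; the lemma as printed has the two symbols $g^{hv}$, $g^{vh}$ interchanged, which is evidently a typo, and your proof establishes the intended statement.
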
 

\begin{proof}
It sufficies to show that each principal minor $\bar G_k$, $k=1,\ldots,n+n^2$, of $\bar G$ is positive. Obviously $\det\bar G_k=1>0$ for $k=1,\ldots,n$. Hence we assume $k>n$. Then each minor $\bar G_k$ is of the same form as the whole matrix $\bar G$, thus we will make calculations using matrix $\bar G$. Computing the determinant of the block matrix we get
\begin{align*}
\det \bar G &=\det(C\otimes \hat g-(c^{\top}\otimes g^{vh})(c\otimes g^{hv})) \\
&=\det(C\otimes \hat g-(c^{\top}c)\otimes(g^{vh}g^{hv})) \\
&=\det((C-c^{\top}c)\otimes \hat g+(c^{\top}c)\otimes(\hat g-g^{vh}g^{hv})).
\end{align*}
Since
\begin{align*}
& \det(C-c^{\top}c)=\det\bar C>0,\\
& \det \hat g>0,\\
& \det(c^{\top}c)\geq 0,\\ 
& \det(\hat g-g^{vh}g^{hv})=\det G>0,
& \end{align*}
it follows that matrices $(C-c^{\top}c)\otimes \hat g$ and $(c^{\top}c)\otimes(\hat g-g^{vh}g^{hv})$ are positive definite. Hence theirs sum is positive definite. 
\end{proof}

If $\bar C=I$ and $g_{TM}$ is the Sasaki metric, then we get Sasaki--Mok metric $\bar g^d$. 

Assume now $\bar C=I$ and $g_{TM}$ is a natural Riemannian metric on $TM$ \cite{ks,as} such that $g_{TM}(X^h,Y^h)=g(X,Y)$ and distributions $\mathcal{H}^{TM}$, $\mathcal{V}^{TM}$ are orthogonal. Hence, there are two smooth functions $\alpha,\beta:[0,\infty)\to\mathbb{R}$ such that
\begin{equation}\label{barg}
\begin{split}
& \bar g(X^h,Y^h)_u=g(X,Y), \\
& \bar g(X^h,Y^{v,i})_u=0,\\
& \bar g(X^{v,i},Y^{v,j})_u=0,\quad i\neq j,\\
& \begin{split}
\bar g(X^{v,i},Y^{v,i})_u &=\alpha(|F(u)|^2)g(X,Y)\\
&+\beta(|F(u)|^2)g(X,F(u))g(Y,F(u)).
\end{split}
\end{split}
\end{equation} 
The above Riemannian metric does not ''see'' the index $i$ of the distribution $\mathcal{V}^i$. Since all distributions $\mathcal{H}^{L(M)}, \mathcal{V}^1,\ldots,\mathcal{V}^n$ are orthogonal, it follows that we may put $F_i(u)=u_i$, that is consider a family of maps $F_1,\ldots,F_n$ rather than one map $F$, in the last condition, to obtain the positive definite bileinear form, hence the Riemannian metric,
\begin{equation}\label{bargnew}
\begin{split}
& \bar g(X^h,Y^h)_u=g(X,Y), \\
& \bar g(X^h,Y^{v,i})_u=0,\\
& \bar g(X^{v,i},Y^{v,j})_u=0,\quad i\neq j,\\
& \bar g(X^{v,i},Y^{v,i})_u=\alpha(|u_i|^2)g(X,Y)+\beta(|u_i|^2)g(X,u_i)g(Y,u_i).
\end{split}
\end{equation}  
We will write $\alpha_i$ and $\beta_i$ instead of $\alpha(|u_i|^2)$ and $\beta(|u_i|^2)$, respectively.

\section{Geometry of $\bar g$}
Let $(M,g)$ be a Riemanniann manifold, $(L(M),\bar g)$ its frame bundle equipped with the metric $\bar g$ of the form \eqref{bargnew}. Let $\overline\nabla$ and $\overline R$ denote the Levi--Civita conection and the curvature tensor of $\bar g$, respectively. 

We recall the identities concerning Lie bracket of horizontal and vertical wector fields \cite{ks0}
\begin{align}
& [X^h,Y^h]_u=[X,Y]_u^h-\sum_i \left(R(X,Y)u_i  \right)^{v,i}, \notag \\
& [X^h,Y^{v,i}]_u=\left( \nabla_X Y \right)_u^{v,i}, \label{liebracket} \\
& [X^{v,i},Y^{v,j}]_u=0. \notag
\end{align}
Moreover, in the local coordinates, for $X=\sum_i\xi_i\frac{\partial}{\partial x_i}$ we have
\begin{align}
& X^h(u^j_i)=-\sum_{a,b}\Gamma^j_{ab}u^a_i\xi_b \label{xhuij} \\
& X^{v,k}(u^j_i)=\xi_j\delta_{ik} \label{xvuij}
\end{align}
where $\Gamma^j_{ab}$ are Christoffel's symbols \cite{ks0}.

\begin{prop}
Connection $\overline\nabla$ satisfies the following relations
\begin{align*}
\left(\overline\nabla_{X^{h}}Y^{h}\right)_u &=\left( \nabla_X Y \right)^{h}_u-\frac{1}{2}\sum_i\left(R(X,Y)u_i\right)^{v,i}_u \\
\left(\overline\nabla_{X^h}Y^{v,i}\right)_u &=\frac{\alpha_i}{2}\left(R(u_i,Y)X\right)_u^h+\left( \nabla_XY \right)^{v,i}_u \\
\left(\overline\nabla_{X^{v,i}}Y^h\right)_u &=\frac{\alpha_i}{2}\left(R(u_i,X)Y\right)_u^h \\
\left(\overline\nabla_{X^{v,i}}Y^{v,j}\right)_u &=0\quad\textrm{for $i\neq j$},\\
\left(\overline\nabla_{X^{v,i}}Y^{v,i}\right)_u &=\frac{\alpha_i '}{\alpha_i}\left( g(X,u_i)Y^{v,i}+g(Y,u_i)X^{v,i} \right) \\
&+\left(\frac{\beta_i '\alpha_i-2\alpha_i '\beta_i}{\alpha_i(\alpha_i+|u_i|^2\beta_i)}g(X,u_i)g(Y,u_i)+\frac{\beta_i-\alpha_i '}{\alpha_i+|u_i|^2\beta_i}g(X,Y)\right)U^i,
\end{align*}
where $U^i_u=u_i^{v,i}$.
\end{prop}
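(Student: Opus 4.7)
The plan is to determine $\overline\nabla$ via the Koszul formula
\[
2\bar g(\overline\nabla_A B, C) = A\bar g(B,C) + B\bar g(A,C) - C\bar g(A,B) + \bar g([A,B],C) - \bar g([A,C],B) - \bar g([B,C],A),
\]
applied to the relevant combinations of horizontal and vertical lifts, reading off $\overline\nabla_A B$ from the resulting functional in $C$. The brackets come from \eqref{liebracket}. Before entering the case analysis, I record the derivative identities that reduce everything to tensorial expressions on $(M,g)$: from \eqref{xhuij} and $\nabla g=0$ one gets $X^h(|u_i|^2)=0$ and $X^h(g(Y,u_i))=g(\nabla_X Y,u_i)$, so $X^h(\alpha_i)=X^h(\beta_i)=0$; from \eqref{xvuij} one gets $X^{v,k}(g(Y,u_i))=\delta_{ik}g(X,Y)$ and $X^{v,k}(|u_i|^2)=2\delta_{ik}g(X,u_i)$, hence $X^{v,k}(\alpha_i)=2\delta_{ik}\alpha_i'g(X,u_i)$ and analogously for $\beta_i$.

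With these in hand the cases $(X^h,Y^h)$, $(X^h,Y^{v,i})$, $(X^{v,i},Y^h)$ and $(X^{v,i},Y^{v,j})$ with $i\neq j$ are essentially routine. For $(X^h,Y^h)$ the horizontal part is $(\nabla_X Y)^h$ by the standard Koszul computation on $(M,g)$, while pairing $[X^h,Y^h]$ against $Z^{v,i}$ extracts the vertical correction $-(R(X,Y)u_i)^{v,i}$; antisymmetry of $R$ kills the $\beta_i$ term and dividing by $\alpha_i$ yields the $1/2$ factor in the stated formula. For the two mixed cases, pairing with $Z^h$ and using the symmetry $g(R(A,B)C,D)=g(R(C,D)A,B)$ converts the curvature surviving from $[\cdot^h,Z^h]$ into $\tfrac{\alpha_i}{2}(R(u_i,\cdot)\cdot)^h$, while pairing with $Z^{v,i}$ produces $(\nabla_X Y)^{v,i}$ in the $\overline\nabla_{X^h}Y^{v,i}$ case and zero in the other after all derivatives of $\alpha_i,\beta_i$ cancel pairwise. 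Finally, in the $i\neq j$ vertical case every term vanishes: brackets are zero, $\bar g(\mathcal{V}^i,\mathcal{V}^j)=0$, and the derivatives of $\alpha_j,\beta_j$ along $\mathcal{V}^i$ vanish by the Kronecker delta above.

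The main case is $\overline\nabla_{X^{v,i}}Y^{v,i}$. First I verify that the horizontal component vanishes: pairing with $Z^h$ leaves only $-Z^h\bar g(X^{v,i},Y^{v,i})$ together with the two bracket terms, and using $Z^h(\alpha_i)=Z^h(\beta_i)=0$ and $Z^h(g(Y,u_i))=g(\nabla_Z Y,u_i)$ they cancel exactly. The $\mathcal{V}^j$-components with $j\neq i$ vanish trivially. Pairing with $Z^{v,i}$, the three derivative terms expand via the $\alpha_i',\beta_i'$ identities and (writing $a=g(X,u_i)$, $b=g(Y,u_i)$, $c=g(Z,u_i)$) reduce to
\[
\alpha_i'\bigl(a\,g(Y,Z)+b\,g(X,Z)-c\,g(X,Y)\bigr)+\beta_i'\,abc+\beta_i c\,g(X,Y).
\]
To read off $\overline\nabla_{X^{v,i}}Y^{v,i}=W^{v,i}$ I must solve $\alpha_i g(W,Z)+\beta_i g(W,u_i)g(Z,u_i)=\text{RHS}$ for all $Z$, i.e.\ invert the quadratic form $\alpha_i g+\beta_i g(\cdot,u_i)g(\cdot,u_i)$ on $T_xM$. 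Writing $W$ as a linear combination of $X$, $Y$, $u_i$, the coefficients of $X$ and $Y$ are forced immediately and give the $\alpha_i'/\alpha_i$ symmetric term of the statement; the coefficient of $u_i$ is then fixed by an equation in which $\beta_i g(W,u_i)$ feeds back, producing the denominator $\alpha_i+|u_i|^2\beta_i$ and the numerator combination $\alpha_i\beta_i'-2\alpha_i'\beta_i$. The main obstacle is precisely this last step: tracking the interaction between the conformal part $\alpha_i g$ and the rank-one perturbation $\beta_i g(\cdot,u_i)g(\cdot,u_i)$ carefully enough that the coefficient of $u_i$ comes out in the exact form stated in the proposition.
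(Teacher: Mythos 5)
Your proposal is correct and follows exactly the route the paper indicates: the Koszul formula combined with the bracket relations \eqref{liebracket} and the derivative identities $X^h(\alpha_i)=X^h(\beta_i)=0$, $X^{v,k}(g(Y,u_i))=\delta_{ik}g(X,Y)$, $X^{v,k}(|u_i|^2)=2\delta_{ik}g(X,u_i)$, which are precisely the equalities the paper's (one-line) proof cites. Your reduction in the vertical--vertical case to $\alpha_i'(a\,g(Y,Z)+b\,g(X,Z)-c\,g(X,Y))+\beta_i'abc+\beta_i c\,g(X,Y)$ is right, and inverting the rank-one perturbed form $\alpha_i g+\beta_i g(\cdot,u_i)g(\cdot,u_i)$ as you describe does yield exactly the stated coefficient of $U^i$, so the ``main obstacle'' you flag is only routine bookkeeping.
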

\begin{proof}
Follows from the formula for the Levi--Civita connection 
\begin{align*}
2\bar g(\overline\nabla_AB,C) &=A\bar g(B,C)+B\bar g(A,C)-C\bar g(A,B)\\
&+\bar g([A,C],B)+\bar g([B,C],A)+\bar g([A,B],C)
\end{align*}
relations \eqref{liebracket} and the following equalities
\begin{align*}
& X^{v,i}_u(g(u_i,Y))=g(X,Y),\\
& X^{v,i}_u(|u_i|^2)=2(X,u_i),\\
& X^h_u(g(u_i,Y))=g(u_i,\nabla_XY).\qedhere
\end{align*}
\end{proof}

Before we compute the curvature tensor, we will need some formulas concerning the Levi--Civita connection $\overline\nabla$ of certain vector fields. 
\begin{lem}\label{lemat}
The following equalities hold
\begin{align*}
&\overline\nabla_{X^h}U^i=0,\\
&\overline\nabla_{X^{v,i}}U^j=0,\\
&\overline\nabla_{X^{v,i}}U^i=\frac{\alpha_i+|u_i|^2\alpha_i '}{\alpha_i}X^{v,i}+\frac{|u_i|^2(\alpha_i\beta_i '-\alpha_i '\beta_i)+\alpha_i\beta_i}{\alpha_i(\alpha_i+|u_i|^2\beta_i)}g(X,u_i)U^i. 
\end{align*}
and
\begin{equation*}
\overline\nabla_W (R(u_i,X)Y)^Q=\sum_j W(u^j_i)(R(u_i,X)Y)^Q+\sum_j u^j_i\overline\nabla_W (R(\frac{\partial}{\partial x_j},X)Y)^Q
\end{equation*}
for any $W\in TL(M)$ and $Q$ denoting the horiznotal or vertical lift.
\end{lem}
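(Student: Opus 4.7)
The plan is to write each $U^i$ in coordinates as $U^i = \sum_j u^j_i (\partial/\partial x_j)^{v,i}$, apply the Leibniz rule for $\overline\nabla$, differentiate the coefficients $u^j_i$ using \eqref{xhuij}--\eqref{xvuij}, and plug in the covariant derivative formulas from the previous proposition applied to the coordinate lifts $(\partial/\partial x_j)^{v,i}$.

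For the first identity, $\overline\nabla_{X^h} U^i$, expanding gives two contributions. The coefficient derivative $X^h(u^j_i) = -\sum_{a,b} \Gamma^j_{ab} u^a_i \xi_b$ produces a sum of vertical lifts, while the proposition yields $\overline\nabla_{X^h}(\partial/\partial x_j)^{v,i} = \tfrac{\alpha_i}{2}(R(u_i,\partial/\partial x_j)X)^h + (\nabla_X \partial/\partial x_j)^{v,i}$. The vertical parts cancel because of the symmetry $\Gamma^j_{ab} = \Gamma^j_{ba}$ (after relabelling). The horizontal parts collect as $\tfrac{\alpha_i}{2}(R(u_i,\sum_j u^j_i \partial/\partial x_j)X)^h = \tfrac{\alpha_i}{2}(R(u_i,u_i)X)^h = 0$. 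For $\overline\nabla_{X^{v,i}} U^j$ with $i\neq j$, both contributions vanish at once: $X^{v,i}(u^k_j) = \xi_k\delta_{ij} = 0$ by \eqref{xvuij}, and $\overline\nabla_{X^{v,i}}(\partial/\partial x_k)^{v,j} = 0$ by the mixed vertical-vertical formula from the proposition.

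The main work lies in the same-index vertical case $\overline\nabla_{X^{v,i}} U^i$. The coefficient derivative $X^{v,i}(u^k_i) = \xi_k$ immediately produces $X^{v,i}$. The bulk term $\sum_k u^k_i \,\overline\nabla_{X^{v,i}}(\partial/\partial x_k)^{v,i}$ is then evaluated using the identity $\sum_k u^k_i g(\partial/\partial x_k, \cdot) = g(u_i, \cdot)$; in particular $\sum_k u^k_i g(\partial/\partial x_k, u_i) = |u_i|^2$ and $\sum_k u^k_i (\partial/\partial x_k)^{v,i} = U^i$. Collecting, the $X^{v,i}$--coefficient becomes $1 + \alpha_i'|u_i|^2/\alpha_i = (\alpha_i + |u_i|^2\alpha_i')/\alpha_i$, and the $U^i$--coefficient is obtained by placing $\alpha_i'/\alpha_i$, $(\beta_i'\alpha_i - 2\alpha_i'\beta_i)|u_i|^2/[\alpha_i(\alpha_i+|u_i|^2\beta_i)]$ and $(\beta_i-\alpha_i')/(\alpha_i+|u_i|^2\beta_i)$ over the common denominator $\alpha_i(\alpha_i + |u_i|^2\beta_i)$; the numerator simplifies (the $\alpha_i'\alpha_i$ terms cancel, and $\alpha_i'|u_i|^2\beta_i - 2\alpha_i'\beta_i|u_i|^2 = -\alpha_i'\beta_i|u_i|^2$) to $|u_i|^2(\alpha_i\beta_i' - \alpha_i'\beta_i) + \alpha_i\beta_i$, as claimed. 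This algebraic consolidation is the only genuinely nontrivial step; I expect it to be the main obstacle since one must keep careful track of several rational expressions.

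The final identity is immediate from the same philosophy: since $R(u_i, X)Y = \sum_j u^j_i R(\partial/\partial x_j, X)Y$ and the lift $(\cdot)^Q$ is $C^\infty(M)$-linear, one has $(R(u_i,X)Y)^Q = \sum_j u^j_i (R(\partial/\partial x_j, X)Y)^Q$, and the Leibniz rule for $\overline\nabla_W$ applied to this product of the function $u^j_i$ and the lifted vector field produces exactly the stated expression.
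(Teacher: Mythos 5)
Your computation is correct and is precisely the ``standard computation in local coordinates'' that the paper's proof alludes to: expand $U^i=\sum_j u^j_i(\partial/\partial x_j)^{v,i}$, apply the Leibniz rule together with \eqref{xhuij}--\eqref{xvuij} and the connection formulas of the preceding proposition, and collect terms (your algebra for the $X^{v,i}$- and $U^i$-coefficients checks out). One small remark: your Leibniz expansion naturally produces $\sum_j W(u^j_i)\,(R(\partial/\partial x_j,X)Y)^Q$ as the first summand, which is the evidently intended form of the last displayed identity; the factor $(R(u_i,X)Y)^Q$ printed there appears to be a typo, so your proof establishes the correct statement.
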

\begin{proof}
Folows by standard computations in local coordinates. 
\end{proof}

\begin{prop}\label{curv}
The curvature tensor $\bar R$ satisfies the following relations
\begin{align*}
\overline R(X^h,Y^h)Z^h &= (R(X,Y)Z)^h+\frac{1}{2}\sum_i(\nabla_Z R)(X,Y)u_i)^{v,i}\\
&-\frac{1}{4}\sum_i\alpha_i\left( R(u_i,R(Y,Z)u_i)X-R(u_i,R(X,Z)u_i)Y\right.\\
&\left.-2R(u_i,R(X,Y)u_i)Z \right)^h,
\end{align*}
\begin{align*}
\overline R(X^h,Y^h)Z^{v,i} &= (R(X,Y)Z)^{v,i}+\frac{\alpha_i}{2}\left( (\nabla_X R)(u_i,Z)Y-(\nabla_Y R)(u_i,Z)X \right)^h\\
&-\frac{\alpha_i}{4}\sum_j\left( R(X,R(u_i,Z)Y)u_j-R(Y,R(u_i,Z)X)u_j \right)^{v,j}\\
&+\frac{\alpha_i '}{\alpha_i}g(Z,u_i)(R(X,Y)u_i)^{v,i}-\frac{\beta_i-\alpha_i '}{\alpha_i+|u_i|^2\beta_i}g(R(X,Y)Z,u_i)U^i,
\end{align*}
\begin{align*}
\overline R(X^h,Y^{v,i})Z^h &=\frac{\alpha_i}{2}\left( (\nabla_XR)(u_i,Y)Z \right)^h-\frac{1}{2}(R(Z,X)Y)^{v,i}\\
&+\frac{\alpha_i '}{2\alpha_i}g(Y,u_i)(R(X,Z)u_i)^{v,i}-\frac{\alpha_i}{4}\sum_j(R(X,R(u_i,Y)Z)u_j)^{v,j}\\
&-\frac{\beta_i-\alpha_i '}{2(\alpha_i+|u_i|^2\beta_i)}g(R(X,Z)Y,u_i)U^i,
\end{align*}
\begin{align*}
\overline R(X^h,Y^{v,i})Z^{v,j} &= -\frac{\alpha_i\alpha_j}{4}\left( R(u_i,Y)R(u_j,Z)X \right)^h
\end{align*}
\begin{align*}
\overline R(X^h,Y^{v,i})Z^{v,i} &= \frac{\alpha_i '}{2}\left( g(Z,u_i)R(u_i,Y)X-g(Y,u_i)R(u_i,Z)X \right)^h\\
&-\frac{\alpha_i^2}{4}(R(u_i,Y)R(u_i,Z)X)^h-\frac{\alpha_i}{2}(R(Y,Z)X)^h
\end{align*}
\begin{align*}
\overline R(X^{v,i},Y^{v,i})Z^h &= \alpha_i(R(X,Y)Z)^h\\
&+\frac{\alpha_i^2}{4}\left( R(u_i,X)R(u_i,Y)Z-R(u_i,Y)R(u_i,X)Z \right)^h \\
&+\alpha_i '(g(X,u_i)(R(u_i,Y)Z)^h-g(Y,u_i)(R(u_i,X)Z)^h)
\end{align*}
\begin{align*}
\overline R(X^{v,i},Y^{v,j})Z^h &= \frac{\alpha_i\alpha_j}{4}\left( R(u_i,X)R(u_j,Y)Z-R(u_j,Y)R(u_i,X)Z \right)^h
\end{align*}
\begin{align*}
\overline R(X^{v,i},Y^{v,i})Z^{v,i}) &=C_i(g(X,u_i)g(Y,Z)-g(Y,u_i)g(X,Z))U^i\\
&+(A_ig(Y,u_i)g(Z,u_i)+B_ig(Y,Z))X^{v,i}\\
&-(A_ig(X,u_i)g(Z,u_i)+B_ig(X,Z))Y^{v,i}
\end{align*}
\begin{align*}
\overline R(X^{v,i},Y^{v,j})Z^{v,k} &=0 \quad\textrm{if $\sharp\{i,j,k\}>1$}
\end{align*}
where
\begin{align*}
A_i &=\frac{3(\alpha_i ')^2-2\alpha_i\alpha_i ''}{\alpha_i^2}+\frac{(\alpha_i\beta_i '-2\alpha_i '\beta_i)(\alpha_i+|u_i|^2\alpha_i ')}{\alpha_i^2(\alpha_i+|u_i|^2\beta_i)},\\
B_i &=\frac{\alpha_i\beta_i-2\alpha_i\alpha_i '-(\alpha_i ')^2|u_i|^2}{\alpha_i(\alpha_i+|u_i|^2\beta_i)},\\
C_i &=-\frac{2\alpha_i ''}{\alpha_i+|u_i|^2\beta_i}+\frac{3\alpha_i(\alpha_i ')^2+2(\alpha_i ')^2\beta_i|u_i|^2+\alpha_i^2\beta_i '-\alpha_i\beta_i^2+\alpha_i\alpha_i '\beta_i '|u_i|^2}{\alpha_i(\alpha_i+|u_i|^2\beta_i)^2)}
\end{align*}
\end{prop}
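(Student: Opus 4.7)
The plan is to apply the definition
\[
\overline R(A,B)C=\overline\nabla_A\overline\nabla_B C-\overline\nabla_B\overline\nabla_A C-\overline\nabla_{[A,B]}C
\]
case by case for every combination of arguments drawn from $\mathcal{H}^{L(M)}$ and the vertical subdistributions $\mathcal{V}^i$, substituting the connection formulas from the previous proposition together with the bracket identities \eqref{liebracket}. The key observation is that the vertical component $-\sum_i(R(X,Y)u_i)^{v,i}$ of $[X^h,Y^h]$ is precisely what generates, after one further application of $\overline\nabla$, the quadratic $\alpha_i R(u_i,\cdot)R(u_i,\cdot)$--terms that appear in $\overline R(X^h,Y^h)Z^h$ and in the $\alpha_j$--summands of $\overline R(X^h,Y^h)Z^{v,i}$.

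Two technical inputs from Lemma \ref{lemat} do the bookkeeping. First, the identities $\overline\nabla_{X^h}U^i=0$, $\overline\nabla_{X^{v,j}}U^i=0$ ($j\neq i$) and the explicit formula for $\overline\nabla_{X^{v,i}}U^i$ are what allow one to apply $\overline\nabla$ a second time to the $U^i$--summand of $\overline\nabla_{X^{v,i}}Y^{v,i}$. Second, the product rule
\[
\overline\nabla_W(R(u_i,X)Y)^Q=\sum_j W(u_i^j)(R(\partial_j,X)Y)^Q+\sum_j u_i^j\,\overline\nabla_W(R(\partial_j,X)Y)^Q,
\]
combined with \eqref{xhuij} and \eqref{xvuij}, converts a horizontal derivative of a $u_i$--dependent curvature term into a $(\nabla_X R)$--expression (explaining all $(\nabla R)$ contributions in the statement), and converts a $W=Y^{v,k}$--derivative into the substitution $u_i\leadsto Y$ when $k=i$ and into zero otherwise; this accounts for every nested factor of the form $R(u_i,Y)$ in the output.

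The main obstacle is the purely vertical case $\overline R(X^{v,i},Y^{v,i})Z^{v,i}$. Here $\overline\nabla_{Y^{v,i}}Z^{v,i}$ already has four summands with coefficients $\alpha_i'/\alpha_i$, $(\beta_i'\alpha_i-2\alpha_i'\beta_i)/(\alpha_i(\alpha_i+|u_i|^2\beta_i))$ and $(\beta_i-\alpha_i')/(\alpha_i+|u_i|^2\beta_i)$, plus a $U^i$--factor. A further application of $\overline\nabla_{X^{v,i}}$ forces a Leibniz expansion of each scalar coefficient, producing $\alpha_i''$, $\beta_i'$ and higher powers of the denominator $\alpha_i+|u_i|^2\beta_i$, together with use of $\overline\nabla_{X^{v,i}}U^i$ on the $U^i$--factor. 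After antisymmetrising in $X\leftrightarrow Y$, the many cross terms must regroup exactly into the coefficients $A_i$, $B_i$, $C_i$ of the statement; this algebraic collapse is purely mechanical but is the only nontrivial cancellation in the whole proof.

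All remaining cases are short. Whenever two of the three arguments are vertical along distinct distributions, the relations $\overline\nabla_{X^{v,i}}Y^{v,j}=0$ and $[X^{v,i},Y^{v,j}]=0$ for $i\neq j$ eliminate two of the three pieces of the curvature tensor, which leaves either a single $\tfrac{\alpha_i\alpha_j}{4}R(u_i,\cdot)R(u_j,\cdot)$--term (as in $\overline R(X^h,Y^{v,i})Z^{v,j}$ and $\overline R(X^{v,i},Y^{v,j})Z^h$) or, for $\overline R(X^{v,i},Y^{v,j})Z^{v,k}$ with $\sharp\{i,j,k\}>1$, zero. The mixed cases $\overline R(X^h,Y^{v,i})Z^h$ and $\overline R(X^h,Y^{v,i})Z^{v,i}$ follow the same pattern but involve, at worst, first derivatives $\alpha_i'$, $\beta_i'$, so they are handled uniformly by the two tools above with no residual obstruction.
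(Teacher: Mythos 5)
Your plan coincides with the paper's own proof, which is exactly this: expand $\overline R(A,B)C=\overline\nabla_A\overline\nabla_BC-\overline\nabla_B\overline\nabla_AC-\overline\nabla_{[A,B]}C$ case by case using the connection formulas of the preceding proposition, the brackets \eqref{liebracket}, and Lemma \ref{lemat}, the paper offering no more detail than you do. One minor imprecision worth noting: in $\overline R(X^h,Y^h)Z^h$ only the summand $-2R(u_i,R(X,Y)u_i)Z$ comes from the vertical part of $[X^h,Y^h]$, while the other two quadratic terms arise from $\overline\nabla_{X^h}$ acting on the vertical part of $\overline\nabla_{Y^h}Z^h$ (and its antisymmetrization in $X\leftrightarrow Y$), but this does not affect the validity of your approach.
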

\begin{proof}
Follows from the characterization of the Levi--Civita connection $\overline{\nabla}$ and Lemma \ref{lemat}.
\end{proof}

\begin{rem}
Notice that
\begin{equation*}
A_i\alpha_i-B\beta_i=C_i(\alpha_i+|u_i|^2\beta_i),
\end{equation*}
which is equivalent to the condition
\begin{equation*}
\bar g(\bar R(X^{v,i},Y^{v,i})Z^{v,i},W^{v,i})=\bar g(\bar R(Z^{v,i},W^{v,i})X^{v,i},Y^{v,i}).
\end{equation*}
\end{rem}

\begin{cor}\label{seccurv}
Let $X,Y$ be two orthonormal vectors in the tangent space $T_xM$. Then the scalar curvature $\overline K$ of $(L(M),\bar g)$ and $K$ of $(M,g)$ are related as follows
\begin{align*}
& \overline K(X^h,Y^h)=K(X,Y)-\frac{3}{4}\sum_i\alpha_i|R(X,Y)u_i|^2,\\
& \overline K(X^h,Y^{v,i})=\frac{\alpha_i^2}{4(\alpha_i+\beta_ig(Y,u_i)^2)}|R(u_i,Y)X|^2,\\
& \overline K(X^{v,i},Y^{v,i})=\frac{A_i(g(X,u_i)^2+g(Y,u_i)^2)
+B_i}{\alpha_i+\beta_i(g(X,u_i)^2+g(Y,u_i)^2)},\\
& \overline K(X^{v,i},Y^{v,j})=0\quad\textrm{for $i\neq j$}.
\end{align*}
In particular, if $(M,g)$ is of constant sectional curvature $\kappa$, then
\begin{align*}
& \overline K(X^h,Y^h)=\kappa-\frac{3}{4}\kappa^2\sum_i\alpha_i(g(X,u_i)^2+g(Y,u_i)^2), \\
& \overline K(X^h,Y^{v,i})=\frac{\kappa^2\alpha_i^2g(X,u_i)^2}{4(\alpha_i+\beta_ig(Y,u_i))}\geq 0.
\end{align*}
If, moreover, $\sum_i\alpha_i(t_i)t_i<\frac{4}{3\kappa}$ for all $t_i>0$, then $\overline K(X^h,Y^h)>0$.
\end{cor}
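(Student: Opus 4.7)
The plan is to apply the definition
\[
\overline K(A,B)=\frac{\bar g(\overline R(A,B)B,A)}{\bar g(A,A)\bar g(B,B)-\bar g(A,B)^{2}}
\]
in each of the four cases, plugging in the formulas of Proposition \ref{curv} together with the inner product rules \eqref{bargnew}. In every case the vectors will be a pair of orthonormal lifts of vectors $X,Y\in T_xM$, so the denominator reduces to a simple expression; the main work is in the numerator.

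First I would handle the horizontal--horizontal case. Setting $Z=Y$ in the formula for $\overline R(X^h,Y^h)Z^h$, the $(\nabla_{Z}R)$ term is purely vertical and dies against $X^h$, the term with $R(Y,Y)=0$ drops out, and what remains is
\[
(R(X,Y)Y)^{h}+\tfrac{3}{4}\sum_i\alpha_i\bigl(R(u_i,R(X,Y)u_i)Y\bigr)^{h}.
\]
Pairing with $X^h$ and using the symmetry $g(R(u_i,V)Y,X)=g(R(Y,X)u_i,V)=-|R(X,Y)u_i|^{2}$ with $V=R(X,Y)u_i$ yields the stated formula, since the denominator is $1$. For $\overline K(X^{h},Y^{v,i})$ I would set $Z=Y$ in the formula for $\overline R(X^h,Y^{v,i})Z^{v,i}$: the $\alpha_i'/2$ term and the $R(Y,Y)X$ term vanish, leaving $-\tfrac{\alpha_i^{2}}{4}(R(u_i,Y)R(u_i,Y)X)^{h}$. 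Pairing with $X^h$ and using skew-symmetry of $R$ in the last two arguments gives $\tfrac{\alpha_i^{2}}{4}|R(u_i,Y)X|^{2}$, while the denominator is $\alpha_i+\beta_ig(Y,u_i)^{2}$. For the mixed case $i\neq j$, the formula $\overline R(X^{v,i},Y^{v,j})Z^{v,k}=0$ whenever $\sharp\{i,j,k\}>1$ immediately gives $\overline K(X^{v,i},Y^{v,j})=0$.

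The case $\overline K(X^{v,i},Y^{v,i})$ is the one I expect to be the main obstacle, since the curvature formula has three terms proportional to $U^i$, $X^{v,i}$, $Y^{v,i}$ and all three inner products are nontrivial. With $Z=Y$ and using $g(X,Y)=0$, $|X|=|Y|=1$, the numerator becomes
\[
C_i\,g(X,u_i)^{2}(\alpha_i+\beta_i|u_i|^{2})+\bigl(A_ig(Y,u_i)^{2}+B_i\bigr)\bigl(\alpha_i+\beta_ig(X,u_i)^{2}\bigr)-A_i\beta_ig(X,u_i)^{2}g(Y,u_i)^{2},
\]
while the denominator simplifies to $\alpha_i\bigl(\alpha_i+\beta_i(g(X,u_i)^{2}+g(Y,u_i)^{2})\bigr)$. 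The key trick is to invoke the identity $A_i\alpha_i-B_i\beta_i=C_i(\alpha_i+|u_i|^{2}\beta_i)$ from the remark to eliminate $C_i$; after this substitution the mixed $\beta_i\, g(X,u_i)^{2}g(Y,u_i)^{2}$ terms cancel and the numerator factors as $\alpha_i\bigl(A_i(g(X,u_i)^{2}+g(Y,u_i)^{2})+B_i\bigr)$, giving the claimed formula after one cancellation.

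For the constant-curvature specialization I would use $R(X,Y)Z=\kappa(g(Y,Z)X-g(X,Z)Y)$, giving $R(X,Y)u_i=\kappa(g(Y,u_i)X-g(X,u_i)Y)$. Since $X,Y$ are orthonormal, $|R(X,Y)u_i|^{2}=\kappa^{2}(g(X,u_i)^{2}+g(Y,u_i)^{2})$, and $R(u_i,Y)X=-\kappa g(X,u_i)Y$ so $|R(u_i,Y)X|^{2}=\kappa^{2}g(X,u_i)^{2}$. Substituting into the two previously derived formulas gives the stated expressions. For the final positivity assertion, Bessel's inequality applied to the orthonormal pair $X,Y$ yields $g(X,u_i)^{2}+g(Y,u_i)^{2}\leq|u_i|^{2}$, hence
\[
\sum_i\alpha_i\bigl(g(X,u_i)^{2}+g(Y,u_i)^{2}\bigr)\leq \sum_i\alpha_i(|u_i|^{2})|u_i|^{2}<\tfrac{4}{3\kappa}
\]
under the given hypothesis, which forces $\overline K(X^h,Y^h)=\kappa-\tfrac{3\kappa^{2}}{4}\sum_i\alpha_i(g(X,u_i)^{2}+g(Y,u_i)^{2})>0$.
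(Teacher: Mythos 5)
Your proposal is correct and follows the same route as the paper, which simply states that the formulas follow from Proposition \ref{curv} plus the Bessel-type estimate $g(X,u_i)^2+g(Y,u_i)^2\leq |u_i|^2$ for the final positivity claim; you merely carry out in full the numerator/denominator computations the paper leaves implicit. Your observation that the vertical--vertical case simplifies via the identity $A_i\alpha_i-B_i\beta_i=C_i(\alpha_i+|u_i|^2\beta_i)$ from the Remark is a correct and useful way to organize that cancellation, and the rest matches the paper's argument exactly.
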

\begin{proof}
The formula for $\overline K$ follows by Proposition \ref{curv}. Since $g(X,u_i)^2+g(Y,u_i)^2\leq |u_i|^2$, hence, if $(M,g)$ is of constant sectional curvature and $\sum_i\alpha_i(t_i)t_i<\frac{4}{3\kappa}$, then 
\begin{equation*}
\overline K(X^h,Y^h)\geq \kappa-\frac{3}{4}\kappa^2\sum_i\alpha_i|u_i|^2>0.
\end{equation*}
\end{proof}

\begin{cor}
The scalar curvature $\bar s$ of $(L(M),\bar g)$ at $u\in L(M)$ is of the form
\begin{align*}
\bar s &=s-\frac{1}{4}\sum_{i,j,k}\alpha_k|R(e_i,e_j)u_k|^2\\
&+\sum_k\left(n(n-1)\frac{B_k}{\alpha_k}
+\frac{2(nA_k\alpha_k-B_k\beta_k)}{\alpha_k^2}|u_k|^2+\frac{(n+3)C_k\beta_k}{\alpha_k^2}|u_k|^4\right.\\
&\left.+\frac{(n-1)\beta_k(B_k(2\alpha_k+\beta_k)+A_k\alpha_k)}{\alpha_k^2(\alpha_k+|u_k|^2\beta_k)}
+\frac{2C_k\beta_k^2}{\alpha_k^2(\alpha_k+|u_k|^2\beta_k)}|u_k|^6\right),
\end{align*}
where $s$ is the scalar curvature of $(M,g)$ and $e_1,\ldots,e_n$ is an orthonormal basis in $T_xM$, $\pi_{L(M)}(u)=x$.
\end{cor}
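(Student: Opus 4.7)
The plan is to evaluate $\bar s(u)$ as $2\sum_{A<B}\bar K(E_A,E_B)$ in a $\bar g$-orthonormal basis $\{E_A\}$ of $T_uL(M)$ adapted to the orthogonal splitting
\[
T_uL(M)=\mathcal{H}^{L(M)}\oplus\mathcal{V}^1\oplus\cdots\oplus\mathcal{V}^n.
\]
Start from any orthonormal basis $e_1,\ldots,e_n$ of $T_xM$; the horizontal lifts $e_a^h$ are already orthonormal in $\bar g$. Each block $\mathcal{V}^i$ carries the anisotropic form $\alpha_i g+\beta_i g(\cdot,u_i)g(\cdot,u_i)$, so (when $u_i\neq 0$) I choose the underlying basis with $e_1=u_i/|u_i|$, obtaining the $\bar g$-orthonormal frame of $\mathcal{V}^i$ given by $e_1^{v,i}/\sqrt{\alpha_i+\beta_i|u_i|^2}$ together with $e_\mu^{v,i}/\sqrt{\alpha_i}$ for $\mu=2,\ldots,n$. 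Pairs coming from different vertical blocks vanish at once by the last line of Corollary~\ref{seccurv}, so only three families of pairs contribute.

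For the horizontal family, summing $\bar K(X^h,Y^h)=K(X,Y)-\tfrac{3}{4}\sum_k\alpha_k|R(X,Y)u_k|^2$ and using $s=\sum_{a,b}K(e_a,e_b)$ yields $s-\tfrac{3}{4}\sum_{i,j,k}\alpha_k|R(e_i,e_j)u_k|^2$. For the mixed $(\mathcal{H},\mathcal{V}^i)$ family, scale invariance of sectional curvature lets me evaluate $\bar K(e_a^h,e_k^{v,i})$ on the unscaled generators, and the Riemann symmetry $g(R(X,Y)Z,W)=g(R(Z,W)X,Y)$ supplies
\[
\sum_{a,k}|R(u_i,e_k)e_a|^2=\sum_{a,b}|R(e_a,e_b)u_i|^2,
\]
so this block contributes $\tfrac{1}{2}\alpha_i\sum_{a,b}|R(e_a,e_b)u_i|^2$; adding it to the horizontal piece turns the coefficient $-\tfrac{3}{4}$ into the $-\tfrac{1}{4}$ appearing in the statement. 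For the intrinsic $\mathcal{V}^i$ family, in the adapted frame exactly $n-1$ pairs meet the direction $u_i/|u_i|$ (so $g(X,u_i)^2+g(Y,u_i)^2=|u_i|^2$) and $\binom{n-1}{2}$ pairs lie in $u_i^\perp$ (so that quantity vanishes); substitution in Corollary~\ref{seccurv} gives
\[
2(n-1)\,\frac{A_i|u_i|^2+B_i}{\alpha_i+\beta_i|u_i|^2}+(n-1)(n-2)\,\frac{B_i}{\alpha_i}.
\]

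The hardest step is the last, purely algebraic one: to match the expression in the statement one rewrites the two-term sum above by reintroducing $C_i$ through the Remark identity $A_i\alpha_i-B_i\beta_i=C_i(\alpha_i+|u_i|^2\beta_i)$ and splits the resulting rational function over the denominators $\alpha_k$, $\alpha_k^2$ and $\alpha_k^2(\alpha_k+|u_k|^2\beta_k)$ that appear in the claimed formula. Apart from this bookkeeping the entire calculation is a routine trace over Proposition~\ref{curv}, with the $u_i=0$ case recovered as a limit.
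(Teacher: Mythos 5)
Your overall strategy is sound and is in substance the same as the paper's: the paper computes $\bar s$ as the full trace over the block decomposition, contracting the vertical blocks with the inverse matrix $\bar g^{ij}_k=\frac{1}{\alpha_k}\delta_{ij}-\frac{\beta_k}{\alpha_k(\alpha_k+|u_k|^2\beta_k)}g(e_i,u_k)g(e_j,u_k)$, while you diagonalize each block by an adapted frame with $f_1=u_i/|u_i|$; these are the same computation. Your horizontal and mixed contributions are correct (the $\mu=1$ mixed terms vanish because $R(u_i,u_i)=0$, and the pair symmetry gives $\sum_{a,b}|R(u_i,e_b)e_a|^2=\sum_{a,b}|R(e_a,e_b)u_i|^2$), so the first line $s-\tfrac14\sum_{i,j,k}\alpha_k|R(e_i,e_j)u_k|^2$ is established exactly as in the paper.

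The genuine gap is the step you explicitly defer as ``purely algebraic bookkeeping'': the identification of your vertical contribution
\begin{equation*}
2(n-1)\,\frac{A_k|u_k|^2+B_k}{\alpha_k+\beta_k|u_k|^2}+(n-1)(n-2)\,\frac{B_k}{\alpha_k}
\end{equation*}
with the five-term expression displayed in the Corollary. This is asserted, not proved, and it fails: using the Remark identity $A_k\alpha_k-B_k\beta_k=C_k(\alpha_k+|u_k|^2\beta_k)$ your sum collapses to $\frac{(n-1)\left(nB_k+2C_k|u_k|^2\right)}{\alpha_k}$, which is a rational function of a completely different shape from the displayed one (the latter even contains the dimensionally odd factor $B_k(2\alpha_k+\beta_k)$ and terms up to $|u_k|^6$). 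A concrete check makes the mismatch explicit: take $n=2$, the Cheeger--Gromoll choice $\alpha=\beta=1/(1+t)$ and $|u_k|^2=1$, so $\alpha_k=\beta_k=\tfrac12$, $A_k=-\tfrac18$, $B_k=\tfrac78$, $C_k=-\tfrac12$; your expression gives $\tfrac32$ (which also agrees with the direct contraction $\sum\bar g^{ip}_k\bar g^{jl}_k\,\bar g(\bar R(e_i^{v,k},e_j^{v,k})e_l^{v,k},e_p^{v,k})$ computed from Proposition \ref{curv}), whereas the displayed formula evaluates to $-\tfrac92$. So the promised reduction cannot be carried out as described: either you must correct your trace (but it agrees with the paper's own contraction scheme), or the printed formula must be amended; in either case your proposal does not yet prove the statement as written, and the missing ``hardest step'' is precisely where it breaks down.
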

\begin{proof}
Fix $u\in L(M)$ and let $e_1,\ldots,e_n$ be an orthonormal basis in $T_xM$, $\pi_{L(M)}(u)=x$. Then \eqref{basis} forms a basis of $T_uL(M)$. Put
\begin{equation*}
\bar g^k_{ij}=\bar g(e_i^{v,k},e_j^{v,k})=\alpha_k\delta_{ij}+\beta_kg(e_i,u_k)g(e_j,u_k).
\end{equation*}
The inverse matrix $(\bar g^{ij}_k)$ to $(\bar g^k_{ij})$ is of the form
\begin{equation*}
\bar g^{ij}_k=\frac{1}{\alpha_k}\delta_{ij}-\frac{\beta_k}{\alpha_k(\alpha_k+|u_k|^2\beta_k)}g(e_i,u_k)g(e_j,u_k).
\end{equation*} 
Hence
\begin{align*}
\bar s &=\sum_{i,j}\bar g(\bar R(e_i^h,e_j^h)e_j^h,e_i^h)+2\sum_{i,j,l,k}\bar g^{jl}_k\bar g(\bar R(e_i^h,e_j^{v,k})e_l^{v,k},e_i^h)\\
&+\sum_{i,j,k,l,p}\bar g^{ip}_k\bar g^{jl}_k\bar g(\bar R(e_i^{v,k},e_j^{v,k})e_l^{v,k},e_p^{v,k})\\
\end{align*}
Follows now from Proposition \ref{curv} and the equality
\begin{equation*}
\sum_{i,j}|R(e_i,e_j)u_k|^2=\sum_{i,j}|R(u_k,e_j)e_i)|^2.\qedhere
\end{equation*}
\end{proof}

In the case of a Cheeger--Gromoll type metric we have:
\begin{cor}
Assume
\begin{equation*}
\alpha_i(t)=\beta_i(t)=\frac{1}{1+t},\quad t>0.
\end{equation*}
Then
\begin{align*}
\overline K(X^{v,i},Y^{v,i})=\frac{-t_i(g(X,u_i)^2+g(Y,u_i)^2)+t_i^2+3t_i+3}{(1+t_i)^2(1+g(X,u_i)^2+g(Y,u_i)^2)},
\end{align*}
where $t_i=|u_i|^2$. In particular, if $(M,g)$ is of constant sectional curvature $0<\kappa<\frac{4}{3n}$, then sectional curvature $\overline K$ is nonnegative.
\end{cor}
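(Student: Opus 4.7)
The plan is direct: the formula for $\overline K(X^{v,i},Y^{v,i})$ is already given in Corollary \ref{seccurv}, so it suffices to substitute the Cheeger--Gromoll choice $\alpha_i(t)=\beta_i(t)=\tfrac{1}{1+t}$ into the expressions for $A_i$ and $B_i$ from Proposition \ref{curv}, simplify, and then verify nonnegativity case by case.

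First, I would record the derivatives $\alpha_i'=\beta_i'=-\tfrac{1}{(1+t_i)^2}$ and $\alpha_i''=\tfrac{2}{(1+t_i)^3}$, and note the key simplification
\[
\alpha_i+|u_i|^2\beta_i=\tfrac{1}{1+t_i}+\tfrac{t_i}{1+t_i}=1,
\]
which collapses many denominators in the formulas for $A_i$ and $B_i$. A short computation then yields $A_i=-\tfrac{t_i}{(1+t_i)^3}$ and $B_i=\tfrac{t_i^2+3t_i+3}{(1+t_i)^3}$. Inserting these into
\[
\overline K(X^{v,i},Y^{v,i})=\frac{A_i(g(X,u_i)^2+g(Y,u_i)^2)+B_i}{\alpha_i+\beta_i(g(X,u_i)^2+g(Y,u_i)^2)},
\]
and using $\alpha_i+\beta_i s=\tfrac{1+s}{1+t_i}$ with $s=g(X,u_i)^2+g(Y,u_i)^2$, both numerator and denominator produce a common factor of $\tfrac{1}{1+t_i}$, leaving exactly the claimed expression.

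For the final nonnegativity claim under constant curvature $0<\kappa<\tfrac{4}{3n}$, I would check all four sectional curvature types from Corollary \ref{seccurv}. The case $\overline K(X^{v,i},Y^{v,j})$ with $i\neq j$ is identically zero, and $\overline K(X^h,Y^{v,i})\geq 0$ is already recorded in that corollary. For $\overline K(X^{v,i},Y^{v,i})$, since $s\leq|u_i|^2=t_i$ by Cauchy--Schwarz (as $X,Y$ are orthonormal), the numerator satisfies $-t_i s+t_i^2+3t_i+3\geq 3t_i+3>0$, so this sectional curvature is actually strictly positive regardless of $\kappa$. For $\overline K(X^h,Y^h)$, I apply the criterion from Corollary \ref{seccurv}: since $\alpha_i(t_i)t_i=\tfrac{t_i}{1+t_i}<1$, we have $\sum_i\alpha_i(t_i)t_i<n<\tfrac{4}{3\kappa}$, so positivity follows.

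The only real obstacle is bookkeeping in the computation of $A_i$ and $B_i$; the key shortcut that prevents the calculation from ballooning is the identity $\alpha_i+|u_i|^2\beta_i=1$, which should be recorded first and used throughout.
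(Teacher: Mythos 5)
Your proposal is correct and follows essentially the same route as the paper: plug the Cheeger--Gromoll choice into the formulas of Corollary \ref{seccurv}, use $g(X,u_i)^2+g(Y,u_i)^2\le t_i$ for the vertical planes, and note $\sum_i t_i/(1+t_i)<n\le \tfrac{4}{3\kappa}$ for the horizontal ones. Your explicit evaluation of $A_i=-t_i/(1+t_i)^3$ and $B_i=(t_i^2+3t_i+3)/(1+t_i)^3$ (both correct, thanks to $\alpha_i+|u_i|^2\beta_i=1$) and your check of the mixed and $i\neq j$ cases merely fill in steps the paper's proof leaves implicit.
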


\begin{proof}
We have
\begin{equation*}
\sum_i\alpha_i(t_i)t_i=\sum_i\frac{t_i}{1+t_i}<\frac{4}{3\kappa}\quad\textrm{for all $t_i>0$}
\end{equation*}
if and only if $0<\kappa<\frac{4}{3n}$. Hence, by Corollary \ref{seccurv} $\overline K(X^h,Y^h)\geq 0$ for $X,Y\in T_xM$ unit and orthogonal. Moreover, $g(X,u_i)^2+(Y,u_i)^2\leq |u_i|^2=t_i$. Thus
\begin{equation*}
\bar K(X^{v,i},Y^{v,i})\geq \frac{-t_i^2+t_i^2+3t_i+3}{t_i(1+t_i)^2}=\frac{3}{t_i(t_i+1)}>0.\qedhere
\end{equation*}
\end{proof}

\end{document}